\newcommand{\N}{\mathbb{N}} % natuerliche Zahlen
\newcommand{\Q}{\mathbb{Q}} % rationale Zahlen
\newcommand{\R}{\mathbb{R}} % reelle Zahlen
\newcommand{\Prob}{\mathbb P}
\newcommand{\Probb}{\mathbb Q}
\newcommand{\E}{\mathbb E}
\newcommand{\I}{\mathbbm{1}}
\newcommand{\eps}{\varepsilon}
\newcommand{\dx}{\,\mathrm{d}}
\newcommand{\id}{\overset{\mathrm{d}} {=}}
\newcommand{\convd}{\overset{\mathrm{d}} {\rightarrow}}
\newcommand{\convfdd}{\overset{\mathrm{fdd}} {\rightarrow}}
\theoremstyle{plain} % default Style, fetter Titel, kursiver Text
\newtheorem{theorem}{Theorem}[section] % das erste Argument wird in Latex verwendet, das zweite erscheint in der Ausgabe; das Argument in eckigen Klammern gibt an, dass Sätze section-weise nummeriert werden sollen
\newtheorem{lemma}[theorem]{Lemma} % das Argument in eckigen Kammern gibt an, dass Lemmata fortlaufend mit Sätzen nummeriert werden sollen
\newtheorem{proposition}[theorem]{Proposition}
\theoremstyle{definition} % fetter Titel, nichtkursiver Text
\theoremstyle{remark} % kursiver Titel, nichtkursiver Text
\newtheorem*{remark}{Remark}
\begin{document}
% Daten für den Titel
\title{Penalizing fractional Brownian motion for being negative}
\author{Frank Aurzada\footnote{Technical University of Darmstadt, Schloßgartenstraße 7, 64289 Darmstadt, Germany. E-mail: aurzada@mathematik.tu-darmstadt.de, buck@mathematik.tu-darmstadt.de, kilian@mathematik.tu-darmstadt.de} \and Micha Buck\footnotemark[1] \and Martin Kilian\footnotemark[1]}
\date{\today}
% Erzeugen des Titels
\maketitle

\begin{abstract}
We study a modification of the fractional analogue of the Brownian meander, which is Brownian motion conditioned to be positive on the time interval ${[0,1]}.$ More precisely, we determine the weak limit of a fractional Brownian motion which is penalized -- instead of being killed -- when leaving the positive half-axis. 
\\In the Brownian case, we give a representation of the limiting process in terms of an explicit SDE and compare it to the SDE fulfilled by the Brownian meander. 
\end{abstract}
{\bf 2010 Mathematics Subject Classification:} 60G22; 60G10
\\{\bf Keywords:} Bessel process; Brownian meander; Brownian motion; Fractional Brownian motion; Girsanov's theorem; Persistence probability; Processes conditioned to be positive; Scaling limit; Stochastic differential equation

\section{Introduction}
The aim of this paper is to make a first contribution to the rigorous study of fractional Brownian motion conditioned to be positive. Defining such a new object and studying its properties is a goal that is formulated in the theoretical physics literature (e.g.~\cite{Zoia2009} and \cite{Majumdar2010}) and is relevant for many interesting physical systems with long-range dependence. It is also a mathematically interesting object, as it would be one of the few occasions where one can define a process conditioned on an event of probability zero outside the realm of Markov processes.

Let $(B_H(t))_{t \ge 0}$ be a standard fractional Brownian motion with Hurst parameter $H \in {(0,1)},$ i.e.~an a.s.~continuous, centred Gaussian process with covariance function $(s,t) \mapsto \frac {1} {2} \left(t^{2H} + s^{2H} - |t-s|^{2H}\right)$ on some probability space $(\Omega, \mathscr{F}, \Prob).$ 

Analogously to the well-known Brownian case $H=1/2,$ there are two approaches of conditioning to be positive: One may consider the sequence of measures
\begin{align}
&\Prob{\left(\left.(B_H(t))_{t \in {[0,1]}} \in \,\cdot ~ \right| \ B_H(s) \ge -T^{-H} \, \forall s \in {[0,1]}\right)} \notag
\\&=\E{\left[\I_{ (T^{-H} B_H(Tt))_{t\in[0,1]} \in \,\cdot ~} \cdot\frac{\I_{B_H(s) \ge -1 \, \forall s \in {[0,T]}}}{\Prob(B_H(s) \ge -1 \, \forall s \in {[0,T]})}\right]}, \ T \to \infty, \label{eq:indicator}
\end{align}
on $C{[0,1]},$ which we want to focus on, or alternatively, for every $t>0,$ the sequence
\begin{equation}
\E{\left[\I_{ (B_H(s))_{s \in {[0,t]}} \in \,\cdot ~} \cdot \frac{\I_{B_H(s) \ge -1 \, \forall s \in {[0,T]}}}{\Prob(B_H(s) \ge -1 \, \forall s \in {[0,T]})}\right]}, \ T \to \infty, \label{eq:bessel}
\end{equation}
on $C{[0,t]}.$ The equality in \eqref{eq:indicator} is due to the self-similarity of $B_H.$ 

The goal of this paper is to determine the weak limit of the following modification of the sequence in \eqref{eq:indicator}:
\begin{equation} \label{eq:indicatorsmooth}
\E{\left[\I_{ (T^{-H} B_H(T t))_{t\in[0,1]} \in \,\cdot ~} \cdot\frac{ \left( \int_0^T \exp(-B_H(s)) \dx s\right)^{-1}}{I(T)}\right]}, \ T \to \infty,
\end{equation}
where
\begin{equation*}
I(T):=\E{\left[\left( \int_0^T \exp(-B_H(s)) \dx s\right)^{-1} \right]}.
\end{equation*}
This is motivated as follows: In order to study the normalization sequence $\Prob(B_H(s) \ge -1 \, \forall s \in {[0,T]})$ in \eqref{eq:indicator}, which is also called persistence probability, the indicator was replaced by the smoother functional $\left( \int_0^T \exp(-B_H(s)) \dx s\right)^{-1}$ by Molchan, see \cite[Statement~1]{Molchan1999}. In fact, the polynomial asymptotic rate of the persistence probability is the same as the one of $I(T).$ For the Brownian case, this result had already been contained in \cite[Section~2.2]{KawazuTanaka1993}. The technique used in \cite[Statement~1]{Molchan1999} will also be the basic guiding line in our study of the weak limit of \eqref{eq:indicatorsmooth}. 

Heuristically, there is the following connection between the persistence probability and the smoothed out counterpart $I(T)$: The typical paths of the fractional Brownian motion $B_H$ contributing to the persistence event (i.e., which satisfy $B_H(s) \ge -1 \, \forall s \in {[0,T]}$) tend to escape to $+\infty$ rather than oscillating around the origin. But these are exactly those paths for which the functional $\left( \int_0^T \exp(-B_H(s)) \dx s\right)^{-1}$ is large and which contribute to $I(T)$ the most, consequently. 

After studying the weak limit of \eqref{eq:indicatorsmooth}, we shall investigate the Brownian case in more detail and show that the resulting limiting process satisfies a concrete stochastic differential equation. For comparison, in the Brownian setup, the sequence in \eqref{eq:indicator} weakly converges to the law of the Brownian meander (see \cite{Durrett1977} for the Brownian motion or \cite{Bolthausen1976} and \cite{Iglehart1974} for the corresponding discrete analogues), whereas the weak limit of the sequence in \eqref{eq:bessel} is given by the law of the three-dimensional Bessel process, started in $1$ and translated to the origin, on ${[0,t]}$ (see e.g.~\cite[Theorem~4.18]{YorRoynette2009}). Therefore, we will compare the SDE which is fulfilled by our limiting process with the one of the Brownian meander. 

Note that the proofs of the existing weak convergence results crucially rely on the Markov property of the Brownian motion, by using e.g.~convergence of transition densities in the case of \eqref{eq:indicator} or the Doob $h$-transform approach in the case of \eqref{eq:bessel}. In the fractional setting, this obviously does not carry over due to the intrinsically non-Markovian structure of fractional Brownian motion. 

Our results are related to the study of persistence probabilities. In this context, persistence means that a stochastic process has a long positive excursion. Analysing the asymptotics of persistence probabilities attracted a great deal of attention in recent years, with applications especially in theoretical physics, see \cite{Majumdar1999} and \cite{BrayMajumdarSchehr2013}. For an overview concerning mathematical results, we refer to the survey \cite{AurzadaSimon2015}. While the main persistence result for fractional Brownian motion has been proved by Molchan already in 1999, \cite{Molchan1999}, results for the corresponding discrete analogues -- fractional sums converging in distribution to the fractional Brownian motion -- were obtained rather recently, see \cite{AurzadaGuillotinPlantardPene2018}, \cite{AurzadaBuck2018} and \cite{Lyu2019}.

This paper is organized as follows: In the next section, we first present our main result, the weak convergence of the distributions in \eqref{eq:indicatorsmooth} to the law of a fractional Brownian motion under a modified probability measure which is equivalent to $\Prob$ (Section \ref{subsec:weakconv}). Afterwards, we analyse the limiting process in the Brownian case and show that it fulfills an explicit stochastic differential equation (Section \ref{subsec:sde}). Finally, the proofs of all the results are given in Section~\ref{sec:proof}.

\section{Results}
\subsection{Weak convergence result}\label{subsec:weakconv}
We begin by stating our main result:
\begin{theorem}\label{thm:main}
Let $(B_H(t))_{t \ge 0}$ be a fractional Brownian motion with Hurst parameter $H \in {(0,1)}$ and, for every $T \ge 1,$ let $(X_{H,T}(t))_{t \in {[0,1]}}$ be a process whose distribution is given by \eqref{eq:indicatorsmooth}, i.e.
\begin{equation*}
\Prob((X_{H,T}(t))_{t \in {[0,1]}} \in \,\cdot ~)=\E{\left[\I_{ (T^{-H} B_H(T t))_{t\in[0,1]} \in \,\cdot ~} \cdot\frac{ \left( \int_0^T \exp(-B_H(s)) \dx s\right)^{-1}}{I(T)}\right]}.
\end{equation*}
Then, for $T \to \infty,$
\begin{equation*}
(X_{H,T}(t))_{t \in {[0,1]}} \convd \left(X_H(t)\right)_{t \in {[0,1]}},
\end{equation*}
on $(C{[0,1]}, \lVert \cdot \rVert_{\infty}),$ where $\left(X_H(t)\right)_{t \in {[0,1]}}$ is a process whose law is given by
\begin{equation*}
\Prob((X_H(t))_{t \in {[0,1]}} \in \,\cdot ~):=\E{\left[\mathbbm{1}_{(B_H(t))_{t \in {[0,1]}} \in \,\cdot ~} \ \cdot \frac {B_H(1) - M_H(1)} {\E[-M_H(1)]}\right]}
\end{equation*}
with $M_H(t):=\min_{s \in {[0,t]}} B_H(s), \ t \ge 0,$ being the running minimum of $B_H.$ 
\end{theorem}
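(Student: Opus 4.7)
The plan is to prove the convergence on $(C[0,1], \lVert\cdot\rVert_\infty)$ by first recasting the law of $X_{H,T}$ as an absolutely continuous perturbation of the law of $B_H$ via self-similarity, then identifying the limit of the resulting Radon--Nikodym density through a Molchan-type asymptotic analysis, and finally verifying tightness.

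\textbf{Reduction.} Setting $Y_T(u) := T^{-H}B_H(Tu)$, which by self-similarity has the same law as $B_H$ on $[0,1]$, the change of variable $s=Tu$ gives $\int_0^T e^{-B_H(s)}\dx s = T\int_0^1 e^{-T^H Y_T(u)}\dx u$, so the law of $X_{H,T}$ is the law of $B_H$ weighted by the density
\[
W_T(B_H) \ := \ \frac{1}{T\,I(T)\,\int_0^1 e^{-T^H B_H(u)}\dx u}.
\]
The theorem therefore reduces to showing, for every bounded continuous $f:C[0,1]\to\R$, the convergence
\[
\E[f(B_H)\,W_T(B_H)] \ \longrightarrow\ \E\Big[f(B_H)\cdot\frac{B_H(1)-M_H(1)}{\E[-M_H(1)]}\Big].
\]

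\textbf{Asymptotic analysis of the density.} In the spirit of Molchan's treatment of the persistence exponent, I would condition on the value of the running minimum through $-M_H(1)=\lambda$, use the identity $\int_0^1 e^{-T^H B_H(u)}\dx u = e^{T^H\lambda}\int_0^1 e^{-T^H(B_H(u)+\lambda)}\dx u$ (where $B_H+\lambda\ge 0$ on this event), and rescale $\mu=T^H\lambda$ to obtain
\[
\E[f(B_H)W_T(B_H)] = \frac{T^{-H}}{T\,I(T)}\int_0^\infty e^{-\mu}\,\E\!\left[\frac{f(B_H)}{\int_0^1 e^{-T^H(B_H(u)+\mu T^{-H})}\dx u}\,\bigg|\,{-M_H(1)}=\tfrac{\mu}{T^H}\right]\!p_{-M_H(1)}\!\left(\tfrac{\mu}{T^H}\right)\dx \mu.
\]
Each of the three $T$-dependent pieces admits explicit asymptotics: Molchan's persistence result gives $I(T)\asymp T^{-(1-H)}$, whence $T^{-H}/(T\,I(T))\asymp T^{-2H}$; the density $p_{-M_H(1)}$ has a polynomial behaviour at $0^+$ (of order $\lambda^{(1-2H)/H}$, obtained by differentiating the same persistence rate via self-similarity); and the inner conditional expectation grows like $T$ through a Laplace expansion around the (near-)argmin of the non-negative path $B_H+\mu T^{-H}$. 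These three factors balance to yield a finite limit in $\mu$, and a careful bookkeeping of the random prefactor in the Laplace expansion should deliver the density $(B_H(1)-M_H(1))/\E[-M_H(1)]$ via dominated convergence.

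\textbf{Tightness and main obstacle.} Tightness in $(C[0,1],\lVert\cdot\rVert_\infty)$ would be established via Kolmogorov's criterion applied to the weighted law: by Hölder, $\E[|B_H(t)-B_H(s)|^p W_T] \le \E[|B_H(t)-B_H(s)|^{pq}]^{1/q}\,\E[W_T^{q'}]^{1/q'}$, the Gaussian estimate $\E[|B_H(t)-B_H(s)|^{pq}]=c_{pq}|t-s|^{pqH}$ takes care of the first factor, and a suitable $L^{q'}$-control of $W_T$ follows from a pointwise lower bound on $\int_0^1 e^{-T^H B_H(u)}\dx u$ combined with Molchan's rate for $I(T)$. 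The technical crux, and the hard part of the argument, lies in the asymptotic step: specifically, isolating the sub-exponential Laplace prefactor and matching it with the particular functional $B_H(1)-M_H(1)$ rather than with, say, some local-time quantity at the argmin. The conditional law of $B_H$ given $-M_H(1)=\epsilon \to 0^+$ is, up to rescaling, essentially the very object the theorem constructs, so the analysis must avoid circularity and rely only on the Gaussian structure of $B_H$, the distribution of its minimum, and Molchan-type persistence bounds. The absence of the Markov property, which rules out the Doob $h$-transform and Williams-type decomposition arguments useful in the Brownian case, is precisely what makes this step technically demanding.
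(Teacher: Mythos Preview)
Your reduction step is correct: self-similarity does let you write the law of $X_{H,T}$ as that of $B_H$ under the density $W_T(B_H)=\big(T\,I(T)\int_0^1 e^{-T^H B_H(u)}\dx u\big)^{-1}$. But from there the proposal diverges from the paper's argument and runs into two genuine gaps.

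\textbf{The asymptotic step.} You try to analyse $W_T$ by conditioning on $-M_H(1)=\mu T^{-H}$ and then extracting a Laplace prefactor from $\int_0^1 e^{-T^H(B_H(u)+\mu T^{-H})}\dx u$, where the integrand's exponent is a nonnegative path touching zero at the argmin. You yourself identify this as the crux, and rightly so: the sub-exponential prefactor in such a Laplace integral depends on the local behaviour of the fractional path near its minimum, which is not controlled by any available result; moreover the conditional law of $B_H$ given $-M_H(1)\to 0^+$ is essentially the fractional meander you are trying to construct, so the argument threatens to be circular. The paper avoids all of this by a trick you are missing: \emph{time reversal}. Using $(B_H(t))_{t\in[0,T]}\id (B_H(T)-B_H(T-t))_{t\in[0,T]}$ one gets
\[
\Big(\int_0^T e^{-B_H(s)}\dx s\Big)^{-1}\ \id\ \frac{e^{B_H(T)}}{\int_0^T e^{B_H(t)}\dx t}\ =\ \frac{\dx}{\dx T}\log\!\int_0^T e^{B_H(t)}\dx t.
\]
After interchanging derivative and expectation (and checking that the derivative of the relevant indicator in $T$ vanishes a.s.), self-similarity and the elementary Laplace identity $\lim_{r\to\infty}\frac{\int_0^1 f(t)e^{rf(t)}\dx t}{\int_0^1 e^{rf(t)}\dx t}=\max_{[0,1]}f$ produce $S_H(1)=\max_{[0,1]}B_H$ directly; a second time reversal turns $S_H(1)$ into $B_H(1)-M_H(1)$. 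No conditioning on $M_H(1)$, no density of the minimum, and no local-time analysis near the argmin are needed.

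\textbf{Tightness.} Your H\"older/Kolmogorov plan requires $\sup_T\E[W_T^{q'}]<\infty$ for some $q'>1$. The only general pointwise lower bound is $\int_0^1 e^{-T^H B_H(u)}\dx u\ge e^{-T^H S_H(1)}$, which yields $W_T\le e^{T^H S_H(1)}/(T\,I(T))$ and hence $\E[W_T^{q'}]\le \E\big[e^{q'T^H S_H(1)}\big]/(T\,I(T))^{q'}$; the numerator grows like $e^{c\,T^{2H}}$ and dominates any polynomial denominator, so the bound is useless. There is no reason to expect uniform $L^{q'}$-integrability of $W_T$ to hold for any $q'>1$. The paper instead reruns the same time-reversal/derivative computation with the event $\{w_{B_H}(\delta)\ge\eps\}$ in place of a finite-dimensional set, obtains the analogue of \eqref{eq:lastexpressionfdd}, and bounds the resulting expectation by Cauchy--Schwarz against $\Prob(w_{B_H}(\delta)\ge\eps)^{1/2}$; this uses only $\E[W_T]=1$-type information, not higher moments.

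In short, the missing ingredient in both parts is the time-reversal/logarithmic-derivative representation of the Molchan functional, which is what makes the limit computable and the tightness estimate tractable.
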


Note that the distribution of the limiting process $X_H$ under $\Prob$ is equal to the law of $B_H$ under the probability measure
\begin{equation}\label{eq:measureQ}
\Probb(A):=\int_A \frac {B_H(1) - M_H(1)} {\E[-M_H(1)]} \dx \Prob, \ A \in \mathscr{F}.
\end{equation}
As one would expect, the density $\frac {\mathrm{d} \Probb} {\mathrm{d} \Prob}$ rewards the paths that tend to escape to $+\infty,$ since in this case, $B_H(1)$ becomes large and $M_H(1)$ stays near to zero. In particular, the new process $X_H$ loses the property of symmetry of the original process $B_H,$ as one can see easily that $\Prob(X_H(1) > 0) > \Prob(X_H(1) < 0).$ However, contrary to a possible limit of the distributions of the original problem in \eqref{eq:indicator}, the limit distribution $\Prob(X_H \in \,\cdot ~)$ is not concentrated on paths staying positive. 

\begin{remark}
In \cite{Molchan1999}, various other scenarios are studied that resemble the persistence
event $\{ B_H(t) \leq 1 , t \in [0,T] \}.$ It is shown that the
probabilities of the following events also have the same polynomial rate:
\begin{itemize}
\item an early position of the maximum, $\{ \operatorname{argmax}_{t\in[0,T]} B_H(t) < 1 \},$
\item an early last zero crossing, $\{ \sup \{ t \in [0,T] : B_H(t)=0 \} <
1 \},$ and
\item a small total time spent on the negative half-axis, $\{ |\{ t\in[0,T]
: B_H(t)<0 \}| < 1 \}.$
\end{itemize}
It would be very interesting to study the $T\to\infty$ limit of $B_H$
conditioned on any of the three last events, and
of course on the persistence event itself, as in \eqref{eq:indicator}. 
\end{remark}

\subsection{Explicit SDE in the Brownian case}\label{subsec:sde}

We consider the Brownian case in this subsection, that is, we consider the case $H=1/2.$ To simplify notation, we abbreviate $(X_{1/2}(t))_{t \in [0,1]}$ to $(X(t))_{t \in [0,1]}$ and set $M_X(t):= \min_{s \in [0,t]} X(s).$ % $(M_{1/2}(t))_{t \in [0,1]}$ to $(M(t))_{t \in [0,1]}$.
Further, $\Phi_{1-t}$ denotes the distribution function of an $\mathcal{N}(0,1-t)$-distribu\-tion. 

\begin{proposition}
\label{prop:BrownianCase}
Let $c \colon [0,1] \times [0,\infty) \to [0,\infty)$ be defined as
\begin{equation*}
c(t,x) := \frac{2 \,\Phi_{1-t}(x)-1}{x + 2 \int_{x}^\infty (1-\Phi_{1-t}(s)) \dx s}.
\end{equation*}
Then, a Brownian motion $(\tilde{B}(t))_{t \in [0,1]}$ exists such that $(X(t))_{t \in [0,1]}$ satisfies the SDE 
\begin{equation*}
\dx X(t) = c(t,X(t)-M_X(t)) \dx t + \dx \tilde{B}(t).
\end{equation*} 
\end{proposition}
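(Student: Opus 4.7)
The natural tool is Girsanov's theorem. By Theorem~\ref{thm:main}, the process $(X(t))_{t \in [0,1]}$ under $\Prob$ has the same law as the coordinate process $B = B_{1/2}$ under the equivalent probability measure $\Probb$ from \eqref{eq:measureQ}. It therefore suffices to produce a $\Probb$-Brownian motion $\tilde B$ such that, under $\Probb$, $\dx B(t) = c(t, B(t) - M(t)) \dx t + \dx \tilde B(t)$, where $M(t) := \min_{s \in [0,t]} B(s)$.

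\textbf{Step 1: The density process.} Set $D := (B(1) - M(1))/\E[-M(1)]$ and let
$$Z_t := \E\bigl[D \,\big|\, \mathscr F_t^B\bigr], \qquad t \in [0,1],$$
which is a strictly positive $\Prob$-martingale (positivity follows since, by time-reversal, $B(1) - M(1)$ is equal in law to the running maximum of a Brownian motion at time~$1$, which is strictly positive a.s.). To evaluate $Z_t$ explicitly, set $W_u := B(t+u) - B(t)$ for $u \in [0, 1-t]$, which is a Brownian motion independent of $\mathscr F_t^B$. With $a := B(t) - M(t) \ge 0$, a short case analysis gives
$$B(1) - M(1) = W_{1-t} + \max\!\Bigl( a, \, -\!\min_{u \in [0,1-t]} W_u \Bigr).$$
By the reflection principle, $-\min_{u} W_u \id |W_{1-t}|$, so taking the $\Prob$-conditional expectation and performing one integration by parts produces
$$Z_t = \frac{f(t, B(t) - M(t))}{\E[-M(1)]}, \qquad f(t,x) := x + 2\int_x^\infty \!\!(1 - \Phi_{1-t}(s)) \dx s.$$

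\textbf{Step 2: Itô and Girsanov.} Since $M$ is continuous and non-increasing, it is of bounded variation, so $\langle B, M\rangle \equiv 0$ and $\langle M\rangle \equiv 0$. Applying Itô's formula to $f(t, B(t) - M(t))$ therefore shows that the Brownian part of $\dx Z_t$ is $(\E[-M(1)])^{-1} f_x(t, B(t) - M(t)) \dx B(t)$, so
$$\frac{\dx \langle B, Z\rangle_t}{Z_t \dx t} = \frac{f_x(t, B(t) - M(t))}{f(t, B(t) - M(t))}.$$
(As a consistency check, the non-Brownian terms must cancel, which is indeed the case: $f$ satisfies the backward heat equation $f_t + \tfrac 1 2 f_{xx} = 0$, and $f_x(t,0) = 0$ kills the contribution of $\dx M$, since $M$ only decreases when $B = M$.) Direct differentiation gives $f_x(t,x) = 2\Phi_{1-t}(x) - 1$, which is exactly the numerator of $c(t,x)$, and $f(t,x)$ is the denominator by construction. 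Girsanov's theorem then yields a $\Probb$-Brownian motion $\tilde B$ under which $B$ satisfies the required SDE, and transferring to $X$ finishes the proof.

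\textbf{Main difficulty.} The computational heart is the explicit identification of $Z_t$: one must split $M(1)$ at time $t$, exploit the independence of the future increment from $\mathscr F_t^B$, invoke the reflection principle for the future minimum, and then massage the resulting expectation into the precise form whose logarithmic derivative matches the function $c$ from the statement. Once $f$ is in hand, the Itô/Girsanov step is essentially mechanical, modulo the standard observation that $\langle B, M\rangle \equiv 0$.
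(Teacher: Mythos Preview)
Your proof is correct and follows the same overall Girsanov strategy as the paper: both identify $X$ under $\Prob$ with $B$ under $\Probb$, compute the density martingale $Z_t=\E[D\mid\mathscr F_t^B]$ explicitly as $f(t,B(t)-M(t))/\E[-M(1)]$ via splitting at time~$t$ and the reflection principle, and then read off the drift as $f_x/f=c$.

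The one genuine difference is in how the numerator $2\Phi_{1-t}(x)-1$ is obtained. The paper invokes the Shiryaev--Yor stochastic integral representation of $M(1)$ to write $Z$ directly as a stochastic integral, from which $Z(s)c(s)$ is read off by comparison with $\dx Z=Zc\,\dx B$; the explicit form of $Z_t$ then serves only to identify the denominator. You instead compute $Z_t=f(t,B(t)-M(t))/\E[-M(1)]$ first and then apply It\^o's formula to $f$, checking by hand that $f_t+\tfrac12 f_{xx}=0$ and $f_x(t,0)=0$ so that the $\dx t$ and $\dx M$ terms drop out. Your route is more self-contained (no external representation theorem needed) at the cost of a short PDE/boundary verification; the paper's route trades that verification for a citation. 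Either way the computation of $Z_t$ by conditional expectation is the common core, and your execution of it is clean.
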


In the following, we will discuss the nature of the process $(X(t))_{t \in [0,1]}$ and we will compare it (in terms of SDEs) to the limiting processes of \eqref{eq:indicator} and \eqref{eq:bessel}, respectively. For this purpose, we recall that the limiting process in \eqref{eq:bessel} is a three-dimensional Bessel process, started in $1$ and translated to the origin, which we denote by $(X^{(be)}(t))_{t \ge 0}.$ Further, it is well-known that a Brownian motion $(\tilde{B}(t))_{t \ge 0}$ exists such that the SDE
\begin{equation}
\label{eq:SDEbessel}
\dx X^{(be)}(t) = c^{(be)}(X^{(be)}(t) + 1) \dx t + \dx \tilde{B}(t)
\end{equation}
with
\begin{equation*}
c^{(be)}(x) := c^{(be)}(t,x) := \frac{1}{x}
\end{equation*}
is satisfied, see e.g.~\cite[Proposition 3.21]{KaratzasShreve}.

In the following lemma (cf. \cite[Lemme~2]{AzemaYor1989}), we provide an SDE that is satisfied by a Brownian meander, which is the limiting process in \eqref{eq:indicator}. 
\begin{lemma}
\label{lem:SDEmeander}
Let $(X^{(me)}(t))_{t \in [0,1]}$ be a Brownian meander and $c \colon [0,1] \times (0,\infty) \to (0,\infty)$ be defined as
\begin{equation*}
c^{(me)}(t,x) := \frac {\exp\!{\left(-\frac {x^2} {2 (1-t)}\right)}} {\int_0^x \exp\!{\left(-\frac {y^2} {2 (1-t)}\right)} \dx y}.
\end{equation*}
Then, a Brownian motion $(\tilde{B}(t))_{t \in [0,1]}$ exists such that $(X^{(me)}(t))_{t \in [0,1]}$ satisfies the SDE 
\begin{equation*}
%\label{eq:SDEmeander}
\dx X^{(me)}(t) = c^{(me)}(t,X^{(me)}(t)) \dx t + \dx \tilde{B}(t).
\end{equation*}
\end{lemma}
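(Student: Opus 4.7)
My plan is to realise the Brownian meander as Brownian motion killed at $0,$ Doob $h$-transformed by the space-time harmonic function
\begin{equation*}
h(t,x) := \Prob\!\left(x + W(s) > 0 \text{ for all } s \in [0, 1-t]\right), \qquad t \in [0,1),\ x > 0,
\end{equation*}
where $W$ is a standard Brownian motion, and then to identify the drift as the logarithmic gradient $\partial_x \log h(t,x).$

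First, I would invoke the classical construction of the meander (Durrett--Iglehart, Imhof) as the weak limit, as $\varepsilon \downarrow 0,$ of Brownian motion started at $\varepsilon$ and conditioned to stay strictly positive throughout $[0,1].$ For each $\varepsilon > 0,$ this conditioned process is a time-inhomogeneous Markov diffusion with transition density
\begin{equation*}
p^{me}_{s,t}(x,y) = p^0_{t-s}(x,y)\, \frac{h(t,y)}{h(s,x)}, \qquad 0 \le s < t \le 1,\ x,y > 0,
\end{equation*}
where $p^0_u$ is the transition density of Brownian motion killed at $0.$ Since $h$ is space-time harmonic on $(0,1) \times (0,\infty),$ the standard theory of Doob $h$-transforms (equivalently, a Girsanov computation on the path space of the killed process) implies that this conditioned diffusion satisfies an SDE with unit diffusion coefficient and drift $\partial_x \log h(t,x).$ Passing to $\varepsilon \downarrow 0$ preserves the SDE on $(0,1],$ while a.s.~continuity of the meander at $0$ pins down $X^{(me)}(0)=0.$

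It then remains to identify $\partial_x \log h$ with $c^{(me)}.$ The reflection principle for the Brownian minimum gives
\begin{equation*}
h(t,x) = \Prob\!\left(\min_{s \in [0,1-t]} W(s) > -x\right) = 2\Phi_{1-t}(x) - 1 = \frac{2}{\sqrt{2\pi(1-t)}} \int_0^x \exp\!\left(-\frac{y^2}{2(1-t)}\right)\dx y,
\end{equation*}
and differentiation yields $\partial_x h(t,x) = \tfrac{2}{\sqrt{2\pi(1-t)}}\exp(-x^2/(2(1-t))).$ The common prefactor cancels in the ratio, leaving exactly $c^{(me)}(t,x).$ Setting $\tilde B(t) := X^{(me)}(t) - \int_0^t c^{(me)}(s, X^{(me)}(s)) \dx s$ and applying L\'evy's characterisation then delivers the required Brownian motion.

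The main obstacle I anticipate is the boundary behaviour near $t=0$ and $x=0.$ A Taylor expansion of the denominator of $c^{(me)}$ shows that the drift blows up like $1/x$ as $x \downarrow 0,$ which on the one hand is precisely what repels the diffusion from the origin and legitimises the $\varepsilon \downarrow 0$ procedure, but on the other hand forces one to verify that $\int_0^t c^{(me)}(s, X^{(me)}(s))\dx s$ is almost surely finite. This can be handled via the explicit one-dimensional marginal density of the meander at time $s,$ which vanishes linearly at $0$; the rest of the argument is routine.
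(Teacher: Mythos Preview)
Your proposal is correct and follows precisely the Doob $h$-transform route that the paper itself sketches in the remark following the lemma: the paper does not give a self-contained proof but points out that the meander's transition density is the $h$-transform of killed Brownian motion for $h(t,x)=\Prob_x(B(s)>0\ \forall s\in[0,1-t])=2\Phi_{1-t}(x)-1$, and that the SDE then follows directly from the explicit form of $h$ (referring to Az\'ema--Yor and to \cite[Section~IV.39]{Rogers2000}). Your identification $\partial_x\log h(t,x)=c^{(me)}(t,x)$ and your integrability check near $x=0$ are exactly the computations needed to turn that sketch into a full proof.
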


\begin{remark}
While preparing the first version of this paper, to our surprise and to the best of our knowledge, we could not find any reference in the literature for an SDE that is satisfied by a Brownian meander. After the submission of the first version, we became aware that this is included in \cite{AzemaYor1989} as well as in \cite[Exercise~3.6]{MansuyYor2008} as a special case of the generalized meander.

Therefore, we do not include a proof of Lemma \ref{lem:SDEmeander} (cf. arXiv:1907.07608v1 [math.PR] for an alternative proof). Nevertheless, we think it is worth pointing out that $X^{(me)}$ can also be characterized as a Doob $h$-transform of Brownian motion: Recall that the transition density of $X^{(me)}$ is given by 
\begin{equation*}
\Prob{\left(\left.X^{(me)}(t+s) \in \dx y \ \right| X^{(me)}(t) = x \right)} = \left( \varphi_s(y-x) - \varphi_s(y+x)  \right) \frac{\Phi_{1-t-s}(y)-\frac {1} {2}}{\Phi_{1-t}(x)-\frac {1} {2}} \dx y,
\end{equation*}
where $\varphi_s$ is the density of an $\mathcal{N}(0,s)$-distribution, 
see e.g.~\cite{Durrett1977}.
Now, it is just a simple observation that this transition density is obtained by the Doob $h$-transform, for the harmonic function $h(t,x) = \Prob_x( B(s)>0 \,\forall s \in [0,1-t] )=2 \Phi_{1-t}(x)-1$ of Brownian motion on $[0,\infty)$, with killing at $0.$ The last equality follows straightforwardly from the reflection principle. Similarly, the transition density of Brownian motion on $[0,\infty)$, with killing at $0,$ can be obtained from the refection principle.  The explicit expression of $h$ can be used to obtain the SDE in Lemma~\ref{lem:SDEmeander} directly, see \cite[Section~IV.39]{Rogers2000}.
\end{remark}

In the following, we compare our limiting process $X$ and the Brownian meander $X^{(me)}$ in terms of SDEs. Additionally, the limiting process $X^{(be)}$ in \eqref{eq:bessel} turns out to be useful for comparisons, but recall that the approach of conditioning in \eqref{eq:bessel} is different to the approach where one gets $X$ and $X^{(me)}$ as limiting processes. Further, we emphasize that, in the situation of Proposition \ref{prop:BrownianCase}, one would think of $c(t,x)$ as the drift away from the former minimum of the process at time $t$, whereas $c^{(me)}(t,x)$ can be thought of as the drift away from $0$ at time $t$. Similarly, $c^{(be)}(t,x)$ can be seen as the drift away from $-1$ at time $t$ of $X^{(be)}$ or, equivalently, as the drift away from $0$ at time $t$ of the three-dimensional Bessel process started at $0.$

We first examine the time dependence of the drift terms. While the drift effect of the terms $c(t,x)$ and $c^{(me)}(t,x)$ changes in time, there is no time dependence in $c^{(be)}(t,x).$ This is rather unsurprising, since the three-dimensional Bessel process can be thought of as a Brownian motion that is conditioned to stay positive for the infinite future, whereas the conditions for the processes $(X(t))_{t \in [0,1]}$ and $(X^{(me)}(t))_{t \in [0,1]},$ respectively, just take the future until time $1$ into account. The drift effect for the Brownian meander decreases in time because with progressing time, it becomes easier to stay above zero until time $1$. In contrast, the drift $c(t,x)$ increases in time. That is, it becomes more unfavourable to first attain a new minimum and then maximize the difference $X(1)-M_X(1)$. The latter quantity comes from the distribution in \eqref{eq:measureQ} that favours paths with a large difference $X(1)-M_X(1)$. Moreover, we note that $c(t,x) \to c^{(be)}(t,x)$, as $t \to 1$.

Now, we fix time $t$ and vary $x$. 
One has $c(t,x) \sim c^{(be)}(x),$ as $x \to \infty.$ Thus, for large values of $x,$ the drift that maximizes the difference $X(1)-M_X(1)$ in the case of Proposition \ref{prop:BrownianCase} and the drift that prevents the process from becoming negative in the three-dimensional Bessel case coincide. 
But we see a completely different behaviour at $0.$ The drift term vanishes when the process gets close to its former minimum, i.e. $c(t,x) \to 0,$ as $x \to 0.$ This seems to be natural, since in this case, it is not as expensive anymore to take first a new minimum and maximize the difference $X(1)-M_X(1)$ afterwards.

In contrast, we can observe sort of the opposite behaviour for the Brownian meander. Here, we have $c^{(me)}(t,x) \sim C \exp(-x^2/2(1-t))$ for some appropriate constant $C > 0,$ as $x \to \infty.$ Thus, the drift term decays much faster, which is due to the finite time horizon in which the process has to stay positive (in contrast to the infinite time horizon in the three-dimensional Bessel case). For $x \to 0,$ we have $c^{(me)}(t,x) \sim c^{(be)}(t,x) = 1/x.$ This seems to be natural again, since being close at $0,$ only the drift that pushes the process away from $0$ becomes relevant. 

\section{Proofs} \label{sec:proof}
\subsection{Proof of Theorem \texorpdfstring{\ref{thm:main}}{2.1}}
The proof is divided into two lemmata. By \cite[Theorem~8.1]{Billingsley1968}, it suffices to show that the finite-dimensional distributions of $X_{H,T}$ converge for $T \to \infty$ to those of $X_H,$ and that the family $\big(\Prob{\left(X_{H,T} \in \,\cdot ~\right)}\big)_{T \ge 1}$ of distributions on $C{[0,1]}$ is tight. 
\\In fact, both necessary ingredients can be proven by adapting Molchan's approach in the proof of \cite[Statement~1]{Molchan1999} for the calculation of the asymptotics of $I(T)$ to our slightly different problem of analysing the expectation of some indicator multiplied by the functional $\left( \int_0^T \exp(-B_H(s)) \dx s\right)^{-1}.$ 
\\We start with the convergence of the finite-dimensional distributions:
\begin{lemma}\label{lem:fdd}
In the setting of Theorem \ref{thm:main}, we have, for $T \to \infty,$
\begin{equation*}
(X_{H,T}(t))_{t \in {[0,1]}} \convfdd \left(X_H(t)\right)_{t \in {[0,1]}},
\end{equation*}
where $\convfdd$ means convergence of the finite-dimensional distributions. 
\end{lemma}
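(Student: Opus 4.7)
First, by self-similarity of $B_H$, the law of $X_{H,T}$ equals that of $B_H$ tilted by $J_T^{-1}/\E[J_T^{-1}]$, where
\[
J_T := T\int_0^1 e^{-T^H B_H(u)}\dx u, \qquad I(T)=\E[J_T^{-1}].
\]
Thus the finite-dimensional convergence reduces to showing that, for every bounded continuous $f:\R^n\to\R$ and $0\le t_1<\cdots<t_n\le 1$,
\begin{equation*}
\frac{\E[f(B_H(t_1),\ldots,B_H(t_n))\,J_T^{-1}]}{\E[J_T^{-1}]} \longrightarrow \frac{\E[f(B_H(t_1),\ldots,B_H(t_n))\,(B_H(1)-M_H(1))]}{\E[-M_H(1)]}, \quad T\to\infty.
\end{equation*}

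The approach, following the hint given by the authors, is to adapt the proof of \cite[Statement~1]{Molchan1999} so as to handle an additional test-function factor. The key step is to prove that for every bounded $F=f(B_H(t_1),\ldots,B_H(t_n))$,
\begin{equation*}
T^{1-H}\,\E[F\,J_T^{-1}] \longrightarrow C\cdot \E[F\,(B_H(1)-M_H(1))], \quad T\to\infty,
\end{equation*}
with a positive constant $C$ independent of $F$. Taking $F\equiv 1$ yields $T^{1-H}I(T)\to C\,\E[-M_H(1)]$, and dividing the two relations gives the lemma. To establish this asymptotic I would write
\[
J_T = T e^{-T^H M_H(1)}\,\tilde J_T, \qquad \tilde J_T := \int_0^1 e^{-T^H(B_H(u)-M_H(1))}\dx u,
\]
and note that under expectation the factor $e^{T^H M_H(1)}$ in $J_T^{-1}$ concentrates the relevant mass on paths with $-M_H(1)=O(T^{-H})$. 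A Laplace-type localisation around the argmin of $B_H$, combined with the self-similar scaling of $B_H$ near its minimum, approximates $\tilde J_T^{-1}$ by a computable functional whose conditional expectation, integrated against the Molchan-type density of $-M_H(1)$ near $0$ (of order $\varepsilon^{(1-H)/H-1}$), yields the desired asymptotic. The factor $B_H(1)-M_H(1)$ then arises as the natural "excursion-range" weight produced by the Laplace localisation.

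The principal obstacle will be the rigorous justification of these asymptotic manipulations and the correct identification of the constant $C$. In particular, one has to (i) quantitatively sharpen Molchan's estimate of the density of $-M_H(1)$ near $0$, (ii) describe the local behaviour of $B_H$ around its argmin in the absence of the Markov property and of the Williams-type decomposition available only at $H=1/2$, and (iii) establish uniform integrability by a truncation separating the dominant regime $\{-M_H(1)\lesssim T^{-H}\log T\}$ from its complement, the latter being negligible by Gaussian tail bounds applied to $B_H$. The test-function factor $F$ is bounded, so it will not affect the order of the estimates, but its interaction with the conditioning on the argmin near zero must be handled via continuity of the finite-dimensional marginals of $B_H$ under this conditioning.
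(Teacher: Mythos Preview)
Your proposal is not a proof but a research outline, and the obstacles you list in (i)--(iii) are not mere technicalities: for general $H\in(0,1)$ there is no known sharp asymptotic for the density of $-M_H(1)$ near zero (Molchan only gives the polynomial order up to slowly varying corrections), and there is no analogue of a Williams decomposition describing $B_H$ near its argmin. Your Laplace localisation around the argmin would require precisely these ingredients to produce the weight $B_H(1)-M_H(1)$ and the correct constant, so the argument as stated is blocked.

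The paper's proof avoids all of this by two tricks you have missed. First, it applies \emph{time reversal} $(B_H(t))_{t\in[0,T]}\overset{\mathrm d}{=}(B_H(T)-B_H(T-t))_{t\in[0,T]}$ before rescaling, which turns the functional $\big(\int_0^T e^{-B_H(s)}\dx s\big)^{-1}$ into
\[
\frac{e^{B_H(T)}}{\int_0^T e^{B_H(t)}\dx t}
\;=\;\frac{\dx}{\dx T}\,\log\!\Big(\int_0^T e^{B_H(t)}\dx t\Big),
\]
i.e.\ a $T$-derivative. Second, after checking that the indicator of the finite-dimensional event is a.s.\ locally constant in $T$ (so its $T$-derivative vanishes), one may swap $\frac{\dx}{\dx T}$ with the expectation, rescale by self-similarity, and differentiate back. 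This yields
\[
I(T)\cdot \Prob\big((X_{H,T}(t_i))_i\in B\big)
= C\,T^{-1}+H T^{H-1}\,\E\!\left[\I_{(B_H(1)-B_H(1-t_i))_i\in B}\cdot
\frac{\int_0^1 B_H(s)\,e^{T^H B_H(s)}\dx s}{\int_0^1 e^{T^H B_H(s)}\dx s}\right].
\]
The fraction inside the expectation converges a.s.\ to $S_H(1)=\max_{[0,1]}B_H$ by the Laplace principle, and is dominated by $\max_{[0,1]}|B_H|$, so dominated convergence applies directly---no uniform-integrability truncation, no small-ball density, no argmin analysis. A final time reversal converts $S_H(1)$ into $B_H(1)-M_H(1)$, and division by Molchan's asymptotic $I(T)=H T^{H-1}(\E[S_H(1)]+o(1))$ gives the limit. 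The logarithmic-derivative identity is the idea that makes the problem tractable; your direct Laplace approach on $J_T^{-1}$ does not expose it.
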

\begin{proof}
Let $d \in \N, \ t_1,\dots,t_d \in {[0,1]}$ and $B \in \mathscr{B}{\left(\R^d\right)}$ be a $\Prob_{(X_H(t_1),\dots,X_H(t_d))}$-continuity set, i.e.~a set satisfying
\begin{equation}\label{eq:continuityset}
\Prob{\left({\left(X_H(t_1),\dots,X_H(t_d)\right)} \in \partial B\right)}=\Probb{\left({\left(B_H(t_1),\dots,B_H(t_d)\right)} \in \partial B\right)} = 0,
\end{equation}
where $\partial B$ denotes the boundary of the set $B$ and $\Probb$ is the probability measure defined in \eqref{eq:measureQ}. By the Portmanteau-Theorem for metric spaces, see \cite[Theorem~2.1]{Billingsley1968}, the assertion follows as soon as we have shown
\begin{equation*}
\lim_{T \to \infty} \Prob\big({\left(X_{H,T}(t_1),\dots,X_{H,T}(t_d)\right)} \in B\big)=\Prob\big({\left(X_{H}(t_1),\dots,X_{H}(t_d)\right)} \in B\big).
\end{equation*}

\emph{Step 1:} We use the property of time reversal $(B_H(t))_{t \in {[0,T]}} \id (B_H(T)-B_H(T-t))_{t \in {[0,T]}}$ (second equation) to get
\begin{align*}
&I(T) \cdot \Prob\big({\left(X_{H,T}(t_1),\dots,X_{H,T}(t_d)\right)} \in B\big) \notag
\\&=\E{\left[\I_{\left(T^{-H} B_H(t_i T)\right)_{i=1,\dots,d} \in B} \ \left( \int_0^T \exp(-B_H(s)) \dx s\right)^{-1}\right]} \notag
\\&=\E{\left[\I_{\big(T^{-H} (B_H(T)-B_H(T (1-t_i)))\big)_{i=1,\dots,d} \in B} \cdot \frac {\exp(B_H(T))} {\int_0^T \exp(B_H(T-s)) \dx s}\right]} \notag
\\&=\E{\left[\I_{\big(T^{-H} (B_H(T)-B_H(T (1-t_i)))\big)_{i=1,\dots,d} \in B} \cdot \frac {\exp(B_H(T))} {\int_0^T \exp(B_H(t)) \dx t}\right]} \notag
\\&=\E{\left[\I_{\big(T^{-H} (B_H(T)-B_H(T (1-t_i)))\big)_{i=1,\dots,d} \in B} \cdot \frac {\dx} {\dx T} \log\!{\left(\int_0^T \exp(B_H(t)) \dx t\right)}\right]}.
\end{align*}

\emph{Step 2:} Showing that the derivative of the indicator vanishes.
\\Note that the measure $\Probb$ is equivalent to $\Prob,$ as the density $\frac {\mathrm{d} \Probb} {\mathrm{d} \Prob}=\frac {B_H(1) - M_H(1)} {\E[-M_H(1)]}$ is a.s.~strictly positive. Thus, assumption \eqref{eq:continuityset} implies
\begin{equation*}
\Prob{\left(\big(T^{-H} (B_H(T)-B_H(T (1-t_i)))\big)_{i=1,\dots,d} \in \partial B\right)}=\Prob{\left({\left(B_H(t_1),\dots,B_H(t_d)\right)} \in \partial B\right)}=0
\end{equation*}
for every $T > 0$ by using time reversal backwards and self-similarity. 
\\So, if $\big(T^{-H} (B_H(T)-B_H(T (1-t_i)))\big)_{i=1,\dots,d} \in B$ for some fixed $T>0,$ then a.s.~also $\big(T^{-H} (B_H(T)-B_H(T (1-t_i)))\big)_{i=1,\dots,d} \in B^{\mathrm{o}}$ holds, where $B^{\mathrm{o}}$ denotes the interior of $B.$ Due to the continuity of $B_H$ and the fact that $B^{\mathrm{o}}$ is an open set, this implies a.s.~the existence of some $h_0>0$ s.t.
\begin{equation*}
\big((T+h)^{-H} (B_H(T+h)-B_H((T+h) (1-t_i)))\big)_{i=1,\dots,d} \in B^{\mathrm{o}} \subseteq B
\end{equation*}
for all $|h| \le h_0.$ Analogously, if $\big(T^{-H} (B_H(T)-B_H(T (1-t_i)))\big)_{i=1,\dots,d} \notin B,$ also
\begin{equation*}
\big((T+h)^{-H} (B_H(T+h)-B_H((T+h) (1-t_i)))\big)_{i=1,\dots,d} \notin B
\end{equation*}
a.s.~for $|h|$ small enough, since the boundary of the complement of a set is equal to the boundary of the set itself. This shows that a.s.
\begin{align*}
&\frac {\dx} {\dx T} \,\I_{\big(T^{-H} (B_H(T)-B_H(T (1-t_i)))\big)_{i=1,\dots,d} \in B}
\\&=\lim_{h \to 0} \frac {\I_{\big((T+h)^{-H} (B_H(T+h)-B_H((T+h) (1-t_i)))\big)_{i=1,\dots,d} \in B}-\I_{\big(T^{-H} (B_H(T)-B_H(T (1-t_i)))\big)_{i=1,\dots,d} \in B}} {h} = 0.
\end{align*}

\emph{Step 3:} Putting the first two steps together, we get
\begin{align*}
&I(T) \cdot \Prob\big({\left(X_{H,T}(t_1),\dots,X_{H,T}(t_d)\right)} \in B\big)
\\&=\E{\left[\frac {\dx} {\dx T} \,\I_{\big(T^{-H} (B_H(T)-B_H(T (1-t_i)))\big)_{i=1,\dots,d} \in B} \,\log\!{\left(\int_0^T \exp(B_H(t)) \dx t\right)}\right]}
\\&=\frac {\dx} {\dx T} \,\E{\left[\I_{\big(T^{-H} (B_H(T)-B_H(T (1-t_i)))\big)_{i=1,\dots,d} \in B} \,\log\!{\left(\int_0^T \exp(B_H(t)) \dx t\right)}\right]},
\end{align*}
where we could interchange differential and expectation, since the indicator is uniformly bounded by $1$ and the derivative $T \mapsto \left( \int_0^T \exp(B_H(t)-B_H(T)) \dx s\right)^{-1}$ of the remaining term is majorized (in some neighbourhood of any fixed $T$) by $C_1 \exp\!{\left(C_2 \max_{t \in {[0,1]}} |B_H(t)|\right)}$ for appropriate constants $C_1, C_2.$ Using the substitution $t=sT$ and self-similarity, we conclude
\begin{align}
&I(T) \cdot \Prob\big({\left(X_{H,T}(t_1),\dots,X_{H,T}(t_d)\right)} \in B\big) \notag
\\&=\frac {\dx} {\dx T} \,\E{\left[\I_{\big(T^{-H} (B_H(T)-B_H(T (1-t_i)))\big)_{i=1,\dots,d} \in B} \,\log\!{\left(\int_0^1 \exp(B_H(sT)) \ T \dx s\right)}\right]} \notag
\\&=\frac {\dx} {\dx T} \,\E{\left[\I_{(B_H(1)-B_H(1-t_i))_{i=1,\dots,d} \in B} \left(\log T + \log\!{\left(\int_0^1 \exp{\left(T^H B_H(s)\right)} \dx s\right)}\right)\right]} \notag
\\&=C \,T^{-1} + \E{\left[\frac {\dx} {\dx T} \,\I_{(B_H(1)-B_H(1-t_i))_{i=1,\dots,d} \in B} \,\log\!{\left(\int_0^1 \exp{\left(T^H B_H(s)\right)} \dx s\right)}\right]} \notag
\\&=C \,T^{-1} + H T^{H-1} \,\E{\left[\I_{(B_H(1)-B_H(1-t_i))_{i=1,\dots,d} \in B} \cdot \frac {\int_0^1 B_H(s) \exp{\left(T^H B_H(s)\right)} \dx s} {\int_0^1 \exp{\left(T^H B_H(s)\right)} \dx s}\right]} \label{eq:lastexpressionfdd}
\end{align}
with $C:=\Prob{\left({\left(B_H(t_1),\dots,B_H(t_d)\right)} \in B\right)}.$ Again, we could interchange differential and expectation, since the indicator (which does not depend on $T,$ this time) is uniformly bounded by $1$ and the derivative $T \mapsto H T^{H-1} \,\frac {\int_0^1 B_H(s) \exp{\left(T^H B_H(s)\right)} \dx s} {\int_0^1 \exp{\left(T^H B_H(s)\right)} \dx s}$ of the remaining term can be majorized (in some neighbourhood of any fixed $T$) by a constant multiple of $\max_{t \in {[0,1]}} |B_H(t)|.$ 
\\Note that, by L'Hôpital's rule and the Laplace principle, 
\begin{equation*}
\lim_{r \to \infty} \frac {\int_0^1 f(t) \exp(r f(t)) \,\mathrm{d} t} {\int_0^1 \exp(rf(t)) \,\mathrm{d} t} = \lim_{r \to \infty} r^{-1} \log\!{\left(\int_0^1 \exp(r f(t)) \,\mathrm{d} t\right)} = \max_{t \in {[0,1]}} f(t)
\end{equation*}
holds for any continuous function $f \colon {[0,1]} \to \R.$ Thus, the fraction inside the expectation in \eqref{eq:lastexpressionfdd} converges a.s.~to $S_H(1):=\max_{t \in {[0,1]}} B_H(t).$ Further, it is majorized by $\max_{t \in {[0,1]}} |B_H(t)|.$ Therefore, dominated convergence gives the representation
\begin{align*}
&I(T) \cdot \Prob\big({\left(X_{H,T}(t_1),\dots,X_{H,T}(t_d)\right)} \in B\big)
\\&=C \,T^{-1} + H T^{H-1} \left(\E{\left[\I_{(B_H(1)-B_H(1-t_i))_{i=1,\dots,d} \in B} \,S_H(1)\right]} + o(1)\right)
\end{align*}
for $T \to \infty.$ Furthermore, we have $I(T)=H T^{H-1} \big(\E[S_H(1)] + o(1)\big)$ for $T \to \infty$ by \cite[Statement~1]{Molchan1999}, so together, we get
\begin{align*}
&\Prob\big({\left(X_{H,T}(t_1),\dots,X_{H,T}(t_d)\right)} \in B\big)
\\&=\frac {C \,T^{-1}} {H T^{H-1} \big(\E[S_H(1)] + o(1)\big)} + \frac {\E{\left[\I_{(B_H(1)-B_H(1-t_i))_{i=1,\dots,d} \in B} \,S_H(1)\right]} + o(1)} {\E[S_H(1)] + o(1)}
\\&\to 0 + \E{\left[\I_{(B_H(1)-B_H(1-t_i))_{i=1,\dots,d} \in B} \cdot \frac {S_H(1)} {\E[S_H(1)]}\right]}
\end{align*}
for $T \to \infty.$ Using again the property of time reversal, the limit is given by
\begin{align*}
&\E{\left[\I_{(B_H(t_i))_{i=1,\dots,d} \in B} \cdot \frac {\max_{t \in {[0,1]}} \big(B_H(1) - B_H(1-t)\big)} {\E[S_H(1)]}\right]}
\\&=\Probb{\left({\left(B_H(t_1),\dots,B_H(t_d)\right)} \in B\right)}=\Prob\big({\left(X_{H}(t_1),\dots,X_{H}(t_d)\right)} \in B\big). \qedhere
\end{align*}
\end{proof}
In order to finish the proof of Theorem \ref{thm:main}, we show tightness in the following lemma:
\begin{lemma}
In the setting of Theorem \ref{thm:main}, the family $\big(\Prob(X_{H,T} \in \,\cdot ~)\big)_{T \ge 1}$ of distributions on $(C{[0,1]}, \lVert \cdot \rVert_{\infty})$ is tight. 
\end{lemma}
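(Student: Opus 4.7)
The plan is to verify the Kolmogorov continuity criterion uniformly in $T$, which together with the trivial tightness of $X_{H,T}(0)=0$ will conclude the argument. Concretely, I will show the existence of $\gamma>0$ with $\gamma H>1$ and a constant $C$ independent of $T$ such that
\begin{equation*}
\E{\left[|X_{H,T}(t)-X_{H,T}(s)|^{\gamma}\right]} \le C |t-s|^{\gamma H} \qquad \text{for all } s,t\in{[0,1]} \text{ and } T\ge 1.
\end{equation*}

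The key point is that the computation in Step~3 of the proof of Lemma~\ref{lem:fdd}, and in particular the equality leading to \eqref{eq:lastexpressionfdd}, is an \emph{exact} identity valid for every $T\ge 1$ and every continuity set $B$ satisfying \eqref{eq:continuityset}, not just a limiting statement. Applying it with $d=2$, $(t_1,t_2)=(t,s)$ and $B=\{x\in\R^2:|x_1-x_2|>\lambda\}$ (which is a continuity set for every $\lambda>0$ by non-degeneracy of $(B_H(t),B_H(s))$ and equivalence of $\Prob$ and $\Probb$) yields
\begin{equation*}
I(T)\,\Prob(|X_{H,T}(t)-X_{H,T}(s)|>\lambda) = \Prob(|B_H(t)-B_H(s)|>\lambda)\,T^{-1} + HT^{H-1}\,\E{\left[\I_{|B_H(1-s)-B_H(1-t)|>\lambda}\,F_T\right]},
\end{equation*}
where $F_T := \frac{\int_0^1 B_H(w)\exp(T^H B_H(w))\dx w}{\int_0^1 \exp(T^H B_H(w))\dx w}$ satisfies $|F_T|\le\max_{w\in{[0,1]}}|B_H(w)|$ almost surely.

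Molchan's asymptotic $I(T)\sim HT^{H-1}\E[S_H(1)]$ combined with the continuity and positivity of $T\mapsto I(T)$ on $[1,\infty)$ gives $I(T)\ge c_0\,T^{H-1}$ uniformly in $T\ge 1$. Dividing the identity by $I(T)$, using $T^{-H}\le 1$ for $T\ge 1$, and applying Cauchy--Schwarz together with $\E[\max_{w\in{[0,1]}} B_H(w)^2]<\infty$ (Fernique's theorem), I obtain
\begin{equation*}
\Prob(|X_{H,T}(t)-X_{H,T}(s)|>\lambda) \le C_1\,\Prob(|B_H(t)-B_H(s)|>\lambda) + C_2\,\Prob(|B_H(1-s)-B_H(1-t)|>\lambda)^{1/2}.
\end{equation*}
Since $B_H(t)-B_H(s)$ is centred Gaussian with variance $|t-s|^{2H}$, and the same holds for $B_H(1-s)-B_H(1-t)$ by stationarity of increments, the standard Gaussian tail bound produces $\Prob(|X_{H,T}(t)-X_{H,T}(s)|>\lambda)\le C_3 \exp(-\lambda^2/(4|t-s|^{2H}))$ uniformly in $T\ge 1$, $\lambda>0$ and $s,t\in{[0,1]}$. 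The layer-cake formula then yields $\E[|X_{H,T}(t)-X_{H,T}(s)|^{\gamma}]\le C\,|t-s|^{\gamma H}$ for every $\gamma>0$, and any choice $\gamma>1/H$ completes the verification of Kolmogorov's criterion.

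The principal obstacle I anticipate is recognising that the Step~3 identity is an exact equality rather than merely a limiting statement; this is precisely what affords uniform (in $T$) control. A seemingly more direct attack via Hölder's inequality applied to the Radon--Nikodym density $(\int_0^T\exp(-B_H(u))\dx u)^{-1}/I(T)$ is doomed, as the $L^q$-norm of this density under $\Prob$ grows like $T^{1-H}$ for every $q>1$, producing an unwanted $T^{1-H}$ factor that does not vanish for $H<1$. Routing through the exact identity together with the Gaussian tail of fBM increments circumvents this difficulty.
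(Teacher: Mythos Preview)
Your argument is correct. Both you and the paper exploit the same exact identity coming from Molchan's technique (the equality leading to \eqref{eq:lastexpressionfdd}), bound the random fraction $F_T$ by $\max_{w\in[0,1]}|B_H(w)|$, and then apply Cauchy--Schwarz together with the lower bound $I(T)\ge c_0T^{H-1}$. The difference lies in which tightness criterion is invoked and, correspondingly, which indicator is plugged into the identity.

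The paper works directly with the modulus-of-continuity criterion from \cite[Theorem~8.2]{Billingsley1968}: it inserts the indicator $\I_{w_{B_H}(\delta)\ge\eps}$ into the identity, which forces it to redo Step~2 (showing that the $T$-derivative of the indicator vanishes a.s.) for this more complicated functional. That verification is the bulk of the paper's proof of the lemma. Your route via Kolmogorov's criterion is more economical in this respect: you only need the identity for the two-point indicator $\I_{|B_H(t)-B_H(s)|>\lambda}$, which is already a special case of \eqref{eq:lastexpressionfdd} with $d=2$ and a half-space $B$, so Step~2 comes for free. The price is that you then need the Gaussian tail bound and the layer-cake integration, but these are standard. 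In short: same engine, different chassis; your version reuses the finite-dimensional computation more efficiently, while the paper's version stays closer to the modulus-of-continuity formulation and is slightly more self-contained at the cost of re-establishing the vanishing-derivative step.
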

\begin{proof}
By \cite[Theorem~8.2]{Billingsley1968}, it suffices to show that
\begin{equation}\label{eq:criterionTightness}
\lim_{\delta \downarrow 0} \,\limsup_{T \to \infty} \,\Prob{\left(w_{X_{H,T}}(\delta) \ge \eps\right)} = 0
\end{equation}
for all $\eps > 0,$ where $w_f(\delta):=\sup_{|t-s|< \delta} |f(s)-f(t)|, \ \delta \in {(0,1]},$ is the modulus of continuity of a function $f \in C{[0,1]}.$ 

\emph{Step 1:} Let $\eps > 0, \ \delta \in {(0,1]}$ and $T \ge 1.$ By doing the same as in Step 1 of the proof of Lemma \ref{lem:fdd}, we get
\begin{align*}
&I(T) \cdot \Prob{\left(w_{X_{H,T}}(\delta) \ge \eps\right)}
\\&=\E{\left[\I_{\sup_{|t-s|<\delta} T^{-H} |B_H(tT)-B_H(sT)| \ge \eps} \ \left( \int_0^T \exp(-B_H(s)) \dx s\right)^{-1}\right]} \notag
\\&=\E{\left[\I_{\sup_{|t-s|<\delta} T^{-H} |B_H(T)-B_H((1-t)T)-B_H(T)+B_H((1-s)T)| \ge \eps} \cdot \frac {\exp(B_H(T))} {\int_0^T \exp(B_H(T-s)) \dx s}\right]} \notag
\\&=\E{\left[\I_{\sup_{|t-s|<\delta} T^{-H} |B_H(tT)-B_H(sT)| \ge \eps} \cdot \frac {\exp(B_H(T))} {\int_0^T \exp(B_H(t)) \dx t}\right]} \notag
\\&=\E{\left[\I_{\sup_{|t-s|<\delta} T^{-H} |B_H(tT)-B_H(sT)| \ge \eps} \cdot \frac {\dx} {\dx T} \log\!{\left(\int_0^T \exp(B_H(t)) \dx t\right)}\right]}.
\end{align*}

\emph{Step 2:} Showing that the derivative of the indicator vanishes.
\\We need to justify that
\begin{equation}
\frac {\dx} {\dx T} \,\I_{\sup_{\substack{t,s \in {[0,1]}\colon \\ |t-s|<\delta}} T^{-H} |B_H(tT)-B_H(sT)| \,\ge \,\eps}=\frac {\dx} {\dx T} \,\I_{\sup_{\substack{t,s \in {[0,T]}\colon \\ |t-s|<\delta T}} |B_H(t)-B_H(s)| \,\ge \,\eps T^H}=0 \label{eq:derivativeIndicatorTightness}
\end{equation}
holds a.s.: Obviously, the event $\left\{\sup_{t,s \in {[0,T]}\colon |t-s|<\delta T} |B_H(t)-B_H(s)| = \eps T^H\right\}$ has probability zero. Let us first assume $\sup_{t,s \in {[0,T]}\colon |t-s|<\delta T} |B_H(t)-B_H(s)| < \eps T^H$ for some fixed $T \ge 1.$ As $B_H$ is even a.s.~uniformly continuous on the finite interval ${[0,2T]},$ there exists some $\eta < T$ s.t. $|B_H(t)-B_H(s)| < \frac {1} {2} \left(\eps T^H - \sup_{t,s \in {[0,T]}\colon |t-s|<\delta T} |B_H(t)-B_H(s)|\right)$ for all $t,s \in {[0,2T]}$ with $|t-s| < \eta.$ Then, also
\begin{equation*}
\sup_{t,s \in {[0,T+h]}\colon |t-s|<\delta (T+h)} |B_H(t)-B_H(s)| < \eps T^H < \eps (T+h)^H
\end{equation*}
for all $0< h < \eta,$ as the triangle inequality gives
\begin{align*}
&\sup_{t,s \in {[0,T+h]}\colon |t-s|<\delta (T+h)} |B_H(t)-B_H(s)|
\\&\le \max\bigg\{\sup_{s,t \in {(T,T+h]}} |B_H(t)-B_H(s)|, 
\\&\sup_{s < t \in {[0,T]}\colon t-s < \delta (T+h)} \big(|B_H(t) - B_H(s+\delta T)| + |B_H(s+\delta T) - B_H(s)|\big),
\\&\sup_{\substack{s \in {[0,T]}, \ t \in {(T,T+h]}\colon \\ t-s < \delta (T+h)}} \big(|B_H(t) - B_H(T)| + |B_H(T)-B_H(s+\delta T)| + |B_H(s+\delta T) - B_H(s)|\big)\bigg\}.
\end{align*}
For $h<0,$ trivially,
\begin{equation*}
\sup_{t,s \in {[0,T+h]}\colon |t-s|<\delta (T+h)} |B_H(t)-B_H(s)| \le \sup_{t,s \in {[0,T]}\colon |t-s|<\delta T} |B_H(t)-B_H(s)| < \eps T^H
\end{equation*}
and for $|h|$ small enough, even $\sup_{t,s \in {[0,T]}\colon |t-s|<\delta T} |B_H(t)-B_H(s)| < \eps (T+h)^H$ holds, as the term on the left hand side is independent of $h.$ Thus, we have in this case
\begin{equation}\label{eq:differenceindicators}
\I_{\sup_{\substack{t,s \in {[0,T]}\colon \\ |t-s|<\delta T}} |B_H(t)-B_H(s)| \,\ge \,\eps T^H}=\I_{\sup_{\substack{t,s \in {[0,T+h]}\colon \\ |t-s|<\delta (T+h)}} |B_H(t)-B_H(s)| \,\ge \,\eps (T+h)^H}
\end{equation}
for $|h|$ small enough. 
\\Conversely, if $\sup_{t,s \in {[0,T]}\colon |t-s|<\delta T} |B_H(t)-B_H(s)| > \eps T^H,$ there exists a.s.~an $\eta < T$ s.t. $|B_H(t)-B_H(s)| < \frac {1} {2} \left(\sup_{t,s \in {[0,T]}\colon |t-s|<\delta T} |B_H(t)-B_H(s)| - \eps T^H\right)$ for all $t,s \in {[0,2T]}$ with $|t-s| < \eta.$ Then, again by the triangle inequality, also
\begin{equation*}
\sup_{t,s \in {[0,T+h]}\colon |t-s|<\delta (T+h)} |B_H(t)-B_H(s)| > \eps T^H > \eps (T+h)^H
\end{equation*}
for all $-\eta < h < 0.$ Analogously to above, for $h>0$ small enough, 
\begin{equation*}
\sup_{t,s \in {[0,T+h]}\colon |t-s|<\delta (T+h)} |B_H(t)-B_H(s)| \ge \sup_{t,s \in {[0,T]}\colon |t-s|<\delta T} |B_H(t)-B_H(s)| > \eps (T+h)^H,
\end{equation*}
so also in this case, \eqref{eq:differenceindicators} holds eventually for $h \to 0.$ Together, we conclude \eqref{eq:derivativeIndicatorTightness}. 

\emph{Step 3:} Following the proof of Lemma \ref{lem:fdd} until \eqref{eq:lastexpressionfdd}, we get
\begin{align}
&I(T) \cdot \Prob{\left(w_{X_{H,T}}(\delta) \ge \eps\right)} \notag
\\=\ &T^{-1} \,\Prob{\left(\sup_{t,s \in {[0,1]}\colon |t-s|<\delta} |B_H(t)-B_H(s)| \ge \eps\right)} \notag
\\&+ H T^{H-1} \,\E{\left[\frac {\int_0^1 B_H(s) \exp{\left(T^H B_H(s)\right)} \dx s} {\int_0^1 \exp{\left(T^H B_H(s)\right)} \dx s} \ \I_{\sup_{t,s \in {[0,1]}\colon |t-s|<\delta} |B_H(t)-B_H(s)| \ge \eps}\right]}. \label{eq:lastexpressionTightness}
\end{align}
As before, we set $S_H(1):=\max_{t \in {[0,1]}} B_H(t)$ and use $I(T)=H T^{H-1} \big(\E[S_H(1)] + o(1)\big)$ for $T \to \infty.$ This implies the existence of some $T_0 \ge 1$ s.t. $I(T) \ge \frac {1} {2} H T^{H-1} \E[S_H(1)]$ for all $T \ge T_0.$ Since the indicator inside the second expectation of \eqref{eq:lastexpressionTightness} is non-negative and the fraction is bounded by $S_H(1),$ we get
\begin{align*}
\Prob{\left(w_{X_{H,T}}(\delta) \ge \eps\right)} \le \ &\frac {2} {H \,\E[S_H(1)]} \,T^{-H} \,\Prob{\left(\sup_{t,s \in {[0,1]}\colon |t-s|<\delta} |B_H(t)-B_H(s)| \ge \eps\right)} 
\\&+ \frac {2} {\E[S_H(1)]} \,\E{\left[S_H(1) \ \I_{\sup_{t,s \in {[0,1]}\colon |t-s|<\delta} |B_H(t)-B_H(s)| \ge \eps}\right]}
\\\le \ &\frac {2} {H \,\E[S_H(1)]} \,\Prob{\left(\sup_{t,s \in {[0,1]}\colon |t-s|<\delta} |B_H(t)-B_H(s)| \ge \eps\right)} 
\\&+ \frac {2} {\E[S_H(1)]} \,\lVert S_H(1) \rVert_2 \,\Prob{\left(\sup_{t,s \in {[0,1]}\colon |t-s|<\delta} |B_H(t)-B_H(s)| \ge \eps\right)}^{\frac {1} {2}}
\\\le \ &C \ \Prob{\left(\sup_{t,s \in {[0,1]}\colon |t-s|<\delta} |B_H(t)-B_H(s)| \ge \eps\right)}^{\frac {1} {2}}
\end{align*}
for all $T \ge T_0$ and $C:=\frac {2} {H \,\E[S_H(1)]} + \frac {2} {\E[S_H(1)]} \,\lVert S_H(1) \rVert_2,$ where we used the Cauchy-Schwarz inequality as well as $T \ge 1$ in the second and $x \le \sqrt{x}$ for $x \in {[0,1]}$ in the third step. \\So in order to conclude \eqref{eq:criterionTightness} and thus the assertion, it suffices to show 
\begin{equation*}
\lim_{\delta \downarrow 0} w_{B_H}(\delta) = \lim_{\delta \downarrow 0} \,\Prob{\left(\sup_{t,s \in {[0,1]}\colon |t-s|<\delta} |B_H(t)-B_H(s)| \ge \eps\right)} = 0.
\end{equation*}
%But this is clear due to the continuity theorem of Kolmogorov, see e.g.~\cite[Theorem~4.11]{Schilling2012}. %
But due to the continuity of $\Prob$ and the a.s.~continuity of $B_H,$ we have
\begin{align*}
&\lim_{\delta \downarrow 0} \,\Prob{\left(\sup_{t,s \in {[0,1]}\colon |t-s|<\delta} |B_H(t)-B_H(s)| \ge \eps\right)} 
\\&= \Prob{\left(\bigcap_{\delta > 0, \delta \in \Q} \left\{\sup_{t,s \in {[0,1]}\colon |t-s|<\delta} |B_H(t)-B_H(s)| \ge \eps\right\}\right)} = 0. \qedhere
\end{align*}
\end{proof}

\subsection{Proof of Proposition \texorpdfstring{\ref{prop:BrownianCase}}{2.2}}

\begin{proof}
In the following, we abbreviate $(B_{1/2}(t))_{t \in [0,1]}$ to $(B(t))_{t \in [0,1]}$ and $(M_{1/2}(t))_{t \in [0,1]}$ to $(M(t))_{t \in [0,1]}$ to be consistent with the notation in the proposition. 
The key observation in the proof of the proposition is that, by Girsanov's Theorem, an absolutely continuous change of measure corresponds to a change of drift.
The main task is to show that the process $(\tilde{B}(t))_{t \in [0,1]}$ given by
\begin{equation*}
\tilde{B}(t) = B(t) - \int_0^t c(s,B(s)-M(s)) \dx s, \ t \in {[0,1]},
\end{equation*}
is a Brownian motion under $\Probb.$
Then, recalling that under $\Probb,$ the process $(B(t))_{t \in [0,1]}$ has the same distribution as the process $(X(t))_{t \in [0,1]}$ under $\Prob,$ the claim follows.  

Since $\Prob$ is equivalent to $\Probb,$ by \cite[Theorem IV.38.5]{Rogers2000}, a previsible process $(c(t))_{t \in [0,1]}$ exists such that 
\begin{equation*}
Z(t) := \left.\frac{\dx \Probb}{\dx \Prob} \right|_{\mathcal{F}_t} = \exp\!{\left( \int_0^t c(s) \dx B(s) - \frac{1}{2} \int_0^t c(s)^2 \dx s \right)}
\end{equation*}
and $(\tilde{B}(t))_{t \in [0,1]}$ is a Brownian motion under $\Probb.$ Here, $(\mathcal{F}_t)_{t \in [0,1]}$ denotes the natural filtration of $(B(t))_{t \in [0,1]}.$
Thus, it remains to derive the explicit expression of $c(t)$ and to show that it coincides with $c(t,B(t)-M(t)).$ For this purpose, we note that $(Z(t))_{t \in [0,1]}$ is the unique solution of the SDE 
\begin{equation}
\label{eq:ZsolutionSDE}
Z(t) = 1 + \int_0^t Z(s) c(s) \dx B(s),
\end{equation}
see e.g.~\cite[Example 3.9]{KaratzasShreve}.
Further, by \cite[Theorem~1]{Shiryaev2004}, $M(1)$ has the stochastic integral representation 
\begin{equation*}
M(1) = \E[M(1)] - 2\int_0^1 (\Phi_{1-s}(B(s)-M(s))-1) \dx B(s).
\end{equation*}
We thus obtain, together with \eqref{eq:measureQ} and the fact that $(Z(t))_{t \in [0,1]}$ is a $\Prob$-martingale,
\begin{align*}
Z(t) 
&= 
\E[ Z(1) \mid \mathcal{F}_t ] \\
&=
\E {\left[ \left.\frac{B(1)-M(1)}{\E[-M(1)]} \ \right| \mathcal{F}_t \right]} \\
&=
\E {\left[ \left.1 + \frac{\int_0^1 (2 \,\Phi_{1-s}(B(s)-M(s))-1) \dx B(s)}{\E[-M(1)]} \ \right| \mathcal{F}_t \right]} \\
&=
1 + \frac{\int_0^t (2 \,\Phi_{1-s}(B(s)-M(s))-1) \dx B(s)}{\E[-M(1)]}, \quad t \in [0,1].
\end{align*}
Consequently, we can conclude from \eqref{eq:ZsolutionSDE} that
\begin{equation}
\label{eq:Zc}
Z(s)c(s) = \frac{2 \,\Phi_{1-s}(B(s)-M(s))-1}{\E[-M(1)]}, \quad s \in [0,1].
\end{equation}
Now, using again that $(Z(t))_{t \in [0,1]}$ is a $\Prob$-martingale, and thus, $Z(t) = \E{\left[\left.\frac{ B(1)-M(1)  }{\E[-M(1)]} \ \right| \mathcal{F}_t \right]},$ the computation of $c(t)$ reduces to a computation of the conditional expectation $\E[ B(1)-M(1) \mid \mathcal{F}_t ].$
To do this, we split the process at time $t$ and define $B^{(t)} := B(1)-B(t)$ and $M^{(t)} := \min_{s \in [t,1]} B(s)-B(t),$ respectively, to simplify notation. 
Then, we have
\begin{align}
\E[ B(1)-M(1) \mid \mathcal{F}_t ] \notag
={}&
\E{\left[ \I_{|M^{(t)}| > B(t)-M(t)} (B^{(t)}-M^{(t)}) \mid \mathcal{F}_t \right]} \notag \\
&+
\E{\left[ \I_{|M^{(t)}| \leq B(t)-M(t)} (B(t)+B^{(t)}-M(t)) \mid \mathcal{F}_t \right]}\notag \\
={}&
\E{\left[ \I_{|M^{(t)}| > B(t)-M(t)} | M^{(t)} | \mid \mathcal{F}_t \right]}\notag \\
&+
\E{\left[ \I_{|M^{(t)}| \leq B(t)-M(t)} (B(t)-M(t)) \mid \mathcal{F}_t \right]}\notag \\
&+
\E[B^{(t)}]\notag \\
={}&
\int_0^\infty \Prob{\left( \I_{|M^{(t)}| > B(t)-M(t)} | M^{(t)} | > s \mid \mathcal{F}_t\right)} \dx s\notag \\
&+
(B(t)-M(t)) \cdot \Prob{\left( |M^{(t)}| \leq B(t)-M(t) \mid \mathcal{F}_t\right)} + 0 \notag\\
={}&
(B(t)-M(t)) \cdot \Prob{\left( |M^{(t)}| > B(t)-M(t) \mid \mathcal{F}_t\right)}\notag\\
%\int_0^{M(t)-B(t)} \Prob{\left( \tilde{M}(1-t) > M(t)-B(t) \right)} \dx a\notag \\
&
+\int_{B(t)-M(t)}^\infty \Prob( | M^{(t)} | > s ) \dx s \notag\\
&+
(B(t)-M(t)) \cdot \Prob{\left( |M^{(t)}| \leq B(t)-M(t) \mid \mathcal{F}_t\right)}\notag \\
={}&
(B(t)-M(t)) + \int_{B(t)-M(t)}^\infty \Prob( | M^{(t)} | > s ) \dx s. \label{eq:condExpect}
\end{align}
Using $\Prob( | M^{(t)} | > s ) = 2 \,(1-\Phi_{1-t}(s)),$ which is a simple conclusion from the reflection principle, the claim follows from \eqref{eq:Zc} and \eqref{eq:condExpect}. 
\end{proof}

\noindent\textbf{Acknowledgement.} This work was supported by Deutsche
Forschungsgemeinschaft (DFG grant AU370/5). We would like to thank Hugo Panzo
(Technion) for pointing out reference \cite{MansuyYor2008} to us. Further, we would like to thank two anonymous referees for their valuable suggestions. 

\nocite{*}
\bibliographystyle{plain}

\end{document}